\documentclass[12pt]{article}
\usepackage[T1]{fontenc}
\usepackage[utf8]{inputenc}
\usepackage{amssymb,amsmath,amsfonts,amsthm,amscd,latexsym,verbatim,graphics,epsfig,indentfirst,xcolor}
\usepackage{geometry}
\def\Aut{\mathrm{Aut}}

\def\id{\mathop{\rm id}}

\def\Der{\mathop{\rm Der}}
\def\pr{\mathrm{pr}}
\def\ad{\mathrm{ad}}

\def\Spec{\mathop{\rm Spec}}
\def\PreLie{\mathop{\rm preLie}}
\def\PostLie{\mathop{\rm postLie}}
\def\PreAs{\mathop{\rm preAs}}
\def\PostAs{\mathop{\rm postAs}}

\newcommand{\tr}{\mathbin{\triangleright}}

\newtheorem{theorem}{Theorem}[section]
\newtheorem{proposition}[theorem]{Proposition}
\newtheorem{lemma}[theorem]{Lemma}
\newtheorem{corollary}[theorem]{Corollary}
\newtheorem{definition}[theorem]{Definition}

\newtheorem{example}[theorem]{Example}

\begin{document}

\begin{flushright}
MSC (2020): 17B35, 16W99
\end{flushright}

\begin{center}
{\Large Failure of Ado-type theorems for preLie and postLie algebras}

\smallskip

Vsevolod Gubarev
\end{center}

\begin{abstract}
Over an arbitrary field of characteristic zero, we construct a two-dimensional preLie algebra and a two-dimensional postLie algebra that do not embed into the corresponding induced structures of any finite-dimensional preassociative or postassociative algebra, respectively.

\medskip
{\it Keywords}: Ado theorem, preLie algebra, postLie algebra,
preassociative algebra, postassociative algebra, Rota--Baxter operator.
\end{abstract}

\section{Introduction}

The classical Ado theorem states that every finite-dimensional Lie algebra over a field of characteristic zero has a faithful finite-dimensional representation, or, equivalently, embeds into the commutator Lie algebra of a~finite-dimensional associative algebra~\cite{Ado-0,Ado}.  

Analogues of the Ado theorem hold for Malcev and nilpotent Sabinin algebras~\cite{MostovoyPerezShestakov},
for Leibniz algebras~\cite{KolesnikovLeibniz},
and for torsion-free conformal Lie algebras with a splitting solvable radical~\cite{KolesnikovAdo}.

The main goal of the paper is to provide counterexamples to the analogues of the Ado theorem
for the pairs (preLie,preAs) and (postLie,postAs).

PreLie algebras appeared independently in the works of E.B.~Vinberg and M.~Gerstenhaber~\cite{Vinberg,Gerstenhaber}, see also the surveys~\cite{Burde,Manchon}.  
The two-operation algebras now called preassociative algebras were introduced by J.-L.~Loday~\cite{LodayDialgebras}.  
Every preassociative algebra $(A,\prec,\succ)$ carries a~natural preLie product $x\tr y=x\succ y-y\prec x$.  

The corresponding universal enveloping problem has a substantial history.
In the terminology of PBW-pairs introduced by A.A.~Mikhalev and I.P.~Shestakov
\cite{MikhalevShestakov}, one asks both for injectivity of the canonical map
and for~a PBW description of the universal enveloping algebra. 
For preLie algebras, the PBW problem and a special form of the enveloping-algebra problem were posed by P.~Kolesnikov~\cite{KolesnikovPreAs}; as recorded in~\cite{DotsenkoTamaroff}, J.-L.~Loday also asked V.~Dotsenko about the PBW problem.
A~positive solution of the embedding problem was obtained in~\cite{GubarevPreLie}, while the PBW property was proved by V.~Dotsenko and P.~Tamaroff~\cite{DotsenkoTamaroff}.  
The restricted positive-characteristic setting was considered by I.~Dokas~\cite{Dokas}.
A~unified construction of the universal enveloping preassociative algebra, together with another proof of
the PBW statement, was given in~\cite{GubarevPBW}.  
Related Hopf-algebraic results for preassociative and brace algebras go back to F.~Chapoton and M.~Ronco~\cite{Chapoton,Ronco}.

The notion of a postLie algebra was introduced by B.~Vallette~\cite{Vallette} and has since arisen in differential geometry, numerical integration, integrable systems, and the theory of affine actions; see~\cite{BaiGuoNiPost,BurdeDekimpeVercammen,EFLM}.
Their associative companion is the variety of postassociative algebras
introduced earlier by J.-L.~Loday and M.~Ronco~\cite{LodayRonco}.  
The embedding of postLie algebras into postassociative algebras was established in~\cite{GubarevPostLie}, and the universal enveloping construction and the PBW property were obtained in~\cite{DotsenkoPost,GubarevPBW}.  
The terms \emph{dendriform} and \emph{tridendriform} are also used for preassociative and postassociative algebras, respectively. We do not use this terminology below.

Rota--Baxter operators provide the main bridge between these structures.  
In weight zero, M.~Aguiar observed that an associative Rota--Baxter algebra carries a~natural preassociative structure~\cite{Aguiar}.  
The nonzero-weight postassociative analogue was obtained by K.~Ebrahimi-Fard~\cite{EbrahimiFard}.  
Universal enveloping Rota--Baxter algebras of preassociative algebras were studied in~\cite{EbrahimiFardGuo,ChenMo}; the embedding theorem for general pre- and postalgebras was proved in~\cite{GK}.  
Universal enveloping associative Rota--Baxter algebras of preassociative and postassociative algebras were constructed in~\cite{GubarevRBAs}.

We write down the natural Ado-type questions for the pairs $(\PreLie,\PreAs)$ and $(\PostLie,\PostAs)$:
\begin{enumerate}
\item Does every finite-dimensional preLie algebra embed into the induced
preLie algebra of a finite-dimensional preassociative algebra?
\item Does every finite-dimensional postLie algebra embed into the induced
postLie algebra of a finite-dimensional postassociative algebra?
\end{enumerate}

Problem~1.1 in~\cite{GubarevPBW} formulates the corresponding universal
embedding, PBW and enveloping-algebra problems for both pairs.  
These two questions impose the additional requirement that the target algebra itself be
finite-dimensional; they are therefore Ado-type refinements of that circle of problems.  
We show that, unlike the classical Lie--associative situation, both refinements have a
negative answer already in dimension two.

For the preLie case we use a one-generator preassociative argument and the
nilpotence of finite-dimensional Zinbiel algebras~\cite{Towers}.  
For the postLie case we pass to a~splitting finite-dimensional associative Rota--Baxter algebra and thereby obtain a~necessary condition for a~potential Ado-type theorem.
The resulting counterexample has an abelian initial Lie algebra and a~solvable induced Lie algebra.

The paper is organized as follows.  
In Section~2, we provide the required definitions and Rota--Baxter constructions.
In Section~3, we give the preLie counterexample.  
In Section~4, we prove the spectral necessary condition and give the postLie counterexample.

Throughout, $\Bbbk$ is a field of characteristic zero.  
In spectral arguments we extend scalars to an algebraic closure, which does not affect finite-dimensionality or injectivity.

The counterexamples were obtained with the assistance of ChatGPT 5.6 Sol.

\section{Preliminaries}

A \emph{preLie algebra} is a vector space $L$ with a bilinear product
$\tr$ satisfying
$$
(x\tr y)\tr z-x\tr(y\tr z)
  =(y\tr x)\tr z-y\tr(x\tr z).
$$
Equivalently, if $D_x(y)=x\tr y$, then $[D_x,D_y]=D_{x\tr y-y\tr x}$.

A \emph{postLie algebra} is a vector space $L$ with a Lie bracket $\{\,,\,\}$ and a bilinear product $\tr$ such that every $D_x\colon L\to L$, $D_x(y) = x\tr y$, is a derivation of the Lie algebra,
$D_x\{y,z\}=\{D_xy,z\}+\{y,D_xz\}$, and
$$
[D_x,D_y] = D_{[x,y]},\quad
[x,y] = \{x,y\} + x\tr y - y\tr x.
$$
The bracket $[\,,\,]$ is called the \emph{induced bracket}.
Thus $D\colon (L,[\,,\,])\to\Der(L,\{\,,\,\})$ is a Lie representation.
A~preLie algebra is precisely a postLie algebra whose initial bracket $\{\,,\,\}$ is zero.

A \emph{preassociative algebra} has two operations $\prec,\succ$ satisfying the following identities
$$
(a\prec b)\prec c=a\prec(b*c),\quad
(a\succ b)\prec c=a\succ(b\prec c),\quad
(a*b)\succ c=a\succ(b\succ c),
$$
where $a*b=a\succ b+a\prec b$.  
Then $*$ is associative and $a\tr b=a\succ b-b\prec a$ is preLie.

A \emph{postassociative algebra} has operations $\prec,\succ,\cdot$ such that $\cdot$ is associative and
the following six identities hold:
$$
\begin{gathered}
(a\prec b)\prec c=a\prec(b*c),\quad
(a\succ b)\prec c=a\succ(b\prec c),\\
(a*b)\succ c=a\succ(b\succ c),\quad
(a\succ b)\cdot c=a\succ(b\cdot c),\\
(a\prec b)\cdot c=a\cdot(b\succ c),\quad
(a\cdot b)\prec c=a\cdot(b\prec c).
\end{gathered}
$$
Here $a*b=a\succ b+a\prec b+a\cdot b$. 
The product $*$ is associative, and the operations
$\{a,b\}=a\cdot b-b\cdot a$ and
$a\tr b=a\succ b-b\prec a$ form a~postLie algebra.  
Its induced bracket is $[a,b]=a*b-b*a$.

\begin{definition}\label{def:ado-properties}
Let $L$ be finite-dimensional.
\begin{enumerate}
\item A preLie algebra $L$ has property $A_{\PreAs}$ if it embeds into
the induced preLie algebra of a finite-dimensional preassociative algebra.
\item A postLie algebra $L$ has property $A_{\PostAs}$ if it embeds into
the induced postLie algebra of a finite-dimensional postassociative algebra.
\end{enumerate}
A preLie algebra will also be regarded as a postLie algebra with zero initial bracket when property $A_{\PostAs}$ is considered.
\end{definition}

The PBW theorems in~\cite{DotsenkoTamaroff,DotsenkoPost,GubarevPBW} give universal enveloping algebras $U_{\PreAs}(L)$ and $U_{\PostAs}(L)$ and injective canonical maps from $L$.  
The following elementary reformulation makes the additional finite-dimensional requirement explicit.

\begin{proposition}\label{prop:finite-codim}
A preLie algebra $L$ has $A_{\PreAs}$ if and only if $U_{\PreAs}(L)$ has a preassociative ideal $I$ of finite codimension such that $I\cap L=0$.  
A postLie algebra $L$ has $A_{\PostAs}$ if and only if $U_{\PostAs}(L)$ has a postassociative ideal $I$ of finite codimension such that $I\cap L=0$.
\end{proposition}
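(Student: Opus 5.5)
Both directions rest on the universal property of $U_{\PreAs}(L)$ (respectively $U_{\PostAs}(L)$) together with the injectivity of the canonical map $\iota\colon L\to U(L)$ given by the PBW theorems quoted above. We identify $L$ with $\iota(L)$, so the condition $I\cap L=0$ makes sense. We treat the preLie case; the postLie case is word for word the same with ``preassociative'' replaced by ``postassociative'' and ``preLie homomorphism'' by ``postLie homomorphism'' (preserving both $\{\,,\,\}$ and $\tr$).

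\emph{Sufficiency.} Let $I$ be a preassociative ideal of $U=U_{\PreAs}(L)$ of finite codimension with $I\cap L=0$. Then $A=U/I$ is a finite-dimensional preassociative algebra. The quotient map $\pi\colon U\to A$ is a homomorphism of preassociative algebras, so it is also a homomorphism of the induced preLie algebras. Hence $\pi\circ\iota\colon L\to A$ is a preLie homomorphism. Its kernel is $\iota^{-1}(I)$, which is zero because $\iota$ is injective and $I\cap \iota(L)=0$. So $L$ embeds into the induced preLie algebra of $A$, and $L$ has property $A_{\PreAs}$.

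\emph{Necessity.} Suppose $\varphi\colon L\to A$ is an injective preLie homomorphism into the induced preLie algebra of a finite-dimensional preassociative algebra $A$. By the universal property of $U$, there is a unique preassociative homomorphism $\Phi\colon U\to A$ with $\Phi\circ\iota=\varphi$. Put $I=\ker\Phi$. This is a preassociative ideal of $U$, and $U/I\cong\Phi(U)\subseteq A$ is finite-dimensional, so $I$ has finite codimension. If $\iota(x)\in I$, then $\varphi(x)=\Phi(\iota(x))=0$, so $x=0$; hence $I\cap L=0$.

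There is no real obstacle. The only point needing care is that ``$I\cap L$'' presupposes $L\subseteq U(L)$, which is exactly the injectivity supplied by the PBW results of \cite{DotsenkoTamaroff,GubarevPBW} (and \cite{DotsenkoPost,GubarevPBW} in the postLie case). Beyond that, one must use the correct universal property: the universal enveloping algebra is taken with respect to the functor sending a preassociative (resp. postassociative) algebra to its induced preLie (resp. postLie) algebra, so that $\Phi$ exists.
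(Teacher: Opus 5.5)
Your proof is correct and matches the paper's approach: the paper calls the proposition an elementary reformulation and leaves the details implicit, and those details are exactly your argument (quotients $U/I$ for sufficiency, the kernel of the homomorphism $U\to A$ given by the universal property for necessity). Your remark that injectivity of $L\to U(L)$ from the PBW theorems is needed only so that $I\cap L$ makes sense also agrees with the paper's comment that PBW amounts to the zero ideal meeting $L$ trivially.
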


Thus the PBW theorem proves that the zero ideal meets $L$ trivially, whereas an Ado
theorem would require a suitable cofinite ideal.

An associative Rota--Baxter algebra of weight $\theta$ is an
associative algebra $A$ with an operator $R$ satisfying
\begin{equation}\label{RB}
R(a)R(b)=R\bigl(R(a)b+aR(b)+\theta ab\bigr).
\end{equation}

We refer to~\cite{Guo} for standard constructions and terminology.

Recall that a Rota--Baxter operator~$R$ of weight~1 defined on an algebra~$A$
is called splitting if there are subalgebras $A_{\pm}$ of~$A$ such that
$A=A_-\oplus A_+$ is a vector-space direct sum of associative subalgebras and
$R(x_- + x_+)=-x_-$ for $x_\pm\in A_\pm$.

For $\theta=0$, the operations
$a\succ b=R(a)b$, $a\prec b=aR(b)$ are preassociative. 
For $\theta=1$, the same formulas together with $a\cdot b=ab$ are postassociative.  
For weight~1, we get a~postLie algebra with the initial bracket $\{u,v\} = uv - vu$ 
and 
\begin{equation}\label{eq:RB-post}
a\tr b = \{R(a),b\}.
\end{equation}

We need the converse only in the following explicit finite form.
The construction is the associative specialization of Theorems~3.8 and~3.11 in~\cite{GK}.

\begin{lemma}\label{lem:doubles}
a) Every finite-dimensional preassociative algebra embeds into a
finite-dimensional associative Rota--Baxter algebra of weight~0.

b) Every finite-dimensional postassociative algebra embeds into a
finite-dimensional associative Rota--Baxter algebra of weight~1 with a splitting Rota--Baxter operator.
\end{lemma}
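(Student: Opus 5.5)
The approach is to realize the abstract operations $\succ,\prec$ (and $\cdot$) as $R(a)b$, $aR(b)$ (and $ab$) inside an explicit split null extension of the associative algebra $(A,*)$ by the bimodule $A$. The identities of Section~2 are exactly what makes this extension associative.

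For a), let $(A,\prec,\succ)$ be finite-dimensional preassociative and put $A_*=(A,*)$. The three preassociative identities
$$
(a*b)\succ c=a\succ(b\succ c),\quad (a\succ b)\prec c=a\succ(b\prec c),\quad (a\prec b)\prec c=a\prec(b*c)
$$
say precisely that $A$ is an $A_*$-bimodule with left action $\succ$ and right action $\prec$. I will form the split null extension $B=A\oplus A_*$ with product
$$
(m,x)(n,y)=(x\succ n+m\prec y,\ x*y).
$$
It is associative because $A_*$ is associative and $A$ is a bimodule. On $B$ define $R(m,x)=(0,m)$, which satisfies $R^2=0$. For weight~$0$ I check \eqref{RB} directly. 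One has $R(u)R(v)=(0,m*n)$. Also $R(u)v+uR(v)=(m\succ n+m\prec n,\ \ast)$, and applying $R$ to it gives $(0,m*n)$, so \eqref{RB} holds. The map $a\mapsto(a,0)$ is an injective linear map with $R(a,0)\,(b,0)=(a\succ b,0)$ and $(a,0)R(b,0)=(a\prec b,0)$. Hence it is an embedding of preassociative algebras, and $\dim B=2\dim A$.

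For b), let $(A,\prec,\succ,\cdot)$ be postassociative. The same three identities again make $A$ an $A_*$-bimodule via $\succ,\prec$. The remaining three identities
$$
(a\succ b)\cdot c=a\succ(b\cdot c),\quad (a\prec b)\cdot c=a\cdot(b\succ c),\quad (a\cdot b)\prec c=a\cdot(b\prec c)
$$
say that $(A,\cdot)$ is an algebra in this bimodule category, i.e.\ a compatible associative multiplication on $M=A$. So $B=A\oplus A_*$ with
$$
(m,x)(n,y)=(m\cdot n+x\succ n+m\prec y,\ x*y)
$$
should be associative, with $A_+=A\oplus 0$ an ideal. The plan is to take a second subalgebra $A_-$ transversal to $A_+$, namely a twisted diagonal $\{(\lambda x,x)\}$ with the sign $\lambda$ (and possibly the sign convention for the $\cdot$-term in the product) fixed by requiring closure under multiplication. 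I then let $R$ be the splitting operator $R(x_-+x_+)=-x_-$. Any splitting operator is automatically a Rota--Baxter operator of weight~$1$, since $-R-\mathrm{id}$ is the complementary projection.

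The final step is to find the embedding $a\mapsto\varphi(a)$ with $\varphi(a\cdot b)=\varphi(a)\varphi(b)$, $\varphi(a\succ b)=R\varphi(a)\,\varphi(b)$ and $\varphi(a\prec b)=\varphi(a)R\varphi(b)$. The natural candidate is $\varphi(a)=(a,0)$. Decomposing $(a,0)$ into its $A_-$ and $A_+$ components expresses $R(a,0)$ as an element with nonzero $A_*$-component, and multiplying it into $(b,0)$ should produce $a\succ b$ through the bimodule action.

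I expect the main obstacle to be the sign bookkeeping in b). One has to choose the twisted diagonal $A_-$, the sign of the $\cdot$-term, and the embedding so that $A_-$ is closed, for which the $\cdot$-contribution must cancel against the mixed terms to leave $x*y$. The same choice must make the three induced operations match on the nose. I would first test the naive choice $A_-=\{(-x,x)\}$. If closure fails by a multiple of $x\cdot y$, I would instead use the opposite sign in the $\cdot$-term, i.e.\ the algebra $(A,-\cdot)$, together with the embedding $a\mapsto(-a,0)$, and recheck. Everything else reduces to the six listed identities, and finite-dimensionality is immediate.
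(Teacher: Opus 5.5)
Your part a) is correct and coincides with the paper's double $\widehat T=T\oplus T'$: your $(m,x)$ is the paper's $x+m'$, and $R(m,x)=(0,m)$ is $R(a')=a$, $R(a)=0$.

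Part b), however, has a genuine gap: the splitting you propose cannot produce the embedding. You take $A_+=A\oplus 0$, the ideal, and $R(x_-+x_+)=-x_-$. So $R$ vanishes on all of $A\oplus 0$, which contains your candidate images $(a,0)$ and $(-a,0)$. Hence $R\varphi(a)\varphi(b)=0=\varphi(a)R\varphi(b)$, and the required identities would force $a\succ b=a\prec b=0$. Your claim that decomposing $(a,0)$ gives $R(a,0)$ a nonzero $A_*$-component is false under your own labeling, since $(a,0)$ has zero $A_-$-component. Swapping the labels does not help: then $R=-\mathrm{id}$ on the ideal and $R(a,0)(b,0)=-(a\cdot b,0)$. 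Whenever the ideal is one of the two summands, the induced $\succ$ on it is either $0$ or $-\cdot$. So no choice of $\lambda$ or of the sign of the $\cdot$-term can repair this; the obstacle is not sign bookkeeping.

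The missing idea is that the image of $A$ must lie in neither summand. Take $A_-=0\oplus A_*$, which is a subalgebra but not an ideal, and $A_+=\{(x,x)\mid x\in A\}$. The latter is already closed with the original sign, because $(x,x)(y,y)=(x\cdot y+x\succ y+x\prec y,\,x*y)=(x*y,\,x*y)$. Writing $(m,x)=(m,m)+(0,x-m)$ gives $R(m,x)=(0,m-x)$, so $R(a,0)=(0,a)$. Then $R(a,0)(b,0)=(a\succ b,0)$, $(a,0)R(b,0)=(a\prec b,0)$ and $(a,0)(b,0)=(a\cdot b,0)$ hold exactly. This is precisely the paper's construction: $\widehat T_-=T$, $\widehat T_+=\{a+a'\}$, $R(a)=-a$, $R(a')=a$, with embedding $a\mapsto a'$. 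Your twisted variant with $(A,-\cdot)$, $\{(-x,x)\}$ and $a\mapsto(-a,0)$ also works, but only when paired with $0\oplus A_*$ rather than with the ideal. It is the image of this construction under the isomorphism $(m,x)\mapsto(-m,x)$.
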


The embeddings preserve the induced preLie and postLie operations.

Let $T$ be a preassociative or postassociative algebra, as appropriate.
Let $T'$ be a~second copy of $T$ and write $a'$ for the copy
of $a$.
In the preassociative case put $\widehat T=T\oplus T'$ and
$$
ab = a*b,\quad
ab' = (a\succ b)',\quad
a'b = (a\prec b)',\quad
a'b' = 0,
$$
where $a*b = a\succ b + a\prec b$, and define $R(a) = 0$, $R(a') = a$.
In the postassociative case use
$$
ab = a*b,\quad
ab' = (a\succ b)',\quad
a'b = (a\prec b)',\quad
a'b' = (a\cdot b)',
$$
where $a*b = a\succ b + a\prec b + a\cdot b$, and put $R(a) = -a$, $R(a') = a$.
Here $\widehat{T}_- = T$ and $\widehat{T}_+ = \{a+a'\mid a\in T\}$.
In both cases $a\to a'$ is the required embedding and preserves the induced preLie
or postLie structure, see~\cite{GK}.

\section{Failure for preLie algebras}

Recall that a Zinbiel algebra is a vector space with a product
$\diamond$ satisfying
\begin{equation}\label{eq:zinbiel}
(a\diamond b+b\diamond a)\diamond c
=a\diamond(b\diamond c).
\end{equation}
The symmetrization $a*b=a\diamond b+b\diamond a$ is commutative and
associative. 
Every finite-dimensional Zinbiel algebra is nilpotent by~\cite[Theorem~2.4]{Towers}.  
The convention in~\cite{Towers} is opposite to~\eqref{eq:zinbiel}; applying Theorem~2.4 to the opposite product gives the statement used here.  
Nilpotence means that all sufficiently long Zinbiel monomials vanish.  
This implies ordinary associative nilpotence of the symmetrized product: an $m$-fold $*$-product expands
as a sum of $\diamond$-monomials of length~$m$, and hence is zero for sufficiently large~$m$.  
The Zinbiel identity also permits one to reduce arbitrary bracketings to the standard one, so the stated
consequence agrees with the usual definitions of Zinbiel nilpotence.

\begin{lemma}\label{lem:one-generator-zinbiel}
Let $D$ be a preassociative algebra generated by one element $x$, and assume
that $x\succ x=x\prec x$.  Then $a\succ b=b\prec a$ for all $a,b\in D$.
Consequently, $D$ is a Zinbiel (precommutative) algebra with product
$a\diamond b=a\succ b$.
\end{lemma}

\begin{proof}
Let $F$~be the universal enveloping preassociative algebra of the one-dimensional preLie algebra with zero product. 
By~\cite[Example~4.4]{GubarevPBW}, $F$ is the free precommutative algebra on its generator.  
In particular, $a\succ b = b\prec a$ holds in~$F$.  
The assignment of its generator to~$x$ induces a surjective preassociative homomorphism $F\to D$, since $D$ is generated by~$x$ and the defining relation $x\succ x - x\prec x = 0$ holds in~$D$. 
Hence the same identity holds in~$D$.
Substituting $a\prec b = b\succ a$ into the third preassociative identity gives
$(a\diamond b+b\diamond a)\diamond c = a\diamond(b\diamond c)$,
so $\diamond$ is a~Zinbiel product.
\end{proof}

Recall that a preLie algebra $(L,\tr)$ is a~Novikov algebra if it also satisfies the identity $(x\tr y)\tr z = (x\tr z)\tr y$.

\begin{theorem}
\label{thm:preas-negative}
Let $\Bbbk$ be a field of characteristic zero and let $L=\Bbbk x\oplus\Bbbk y$ with
\begin{equation}\label{eq:preas-counter}
x\tr y = y, \quad
x\tr x = y\tr x = y\tr y = 0.
\end{equation}
Then $L$ is a preLie algebra which does not have $A_{\PreAs}$.
\end{theorem}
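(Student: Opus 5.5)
Before the main argument, I check that \eqref{eq:preas-counter} defines a preLie algebra. In the equivalent form $[D_x,D_y]=D_{x\tr y-y\tr x}$, we have $D_y=0$ and $D_x$ is the projection $y\mapsto y$, $x\mapsto 0$. Hence $[D_x,D_y]=0$. Also $x\tr y-y\tr x=y$, so the right-hand side is $D_y=0$. The remaining case $[D_x,D_x]=0=D_0$ is trivial.

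For the main claim, suppose to the contrary that $L$ embeds into the induced preLie algebra of a finite-dimensional preassociative algebra $T$. Identify $x,y$ with their images, so $y\neq 0$. The strategy is to show that the linear operator
$$\Phi\colon T\to T,\qquad \Phi(b)=x\tr b=x\succ b-b\prec x,$$
is nilpotent on $T$. This contradicts $\Phi(y)=x\tr y=y$ with $y\ne0$.

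First I will use the relation $x\tr x=0$, i.e.\ $x\succ x=x\prec x$. Let $D\subseteq T$ be the preassociative subalgebra generated by $x$. It is finite-dimensional, and by Lemma~\ref{lem:one-generator-zinbiel} it is a Zinbiel algebra with $a\diamond b=a\succ b$ and $a\prec b=b\diamond a$. By the nilpotence of finite-dimensional Zinbiel algebras (\cite{Towers}, as discussed above), the symmetrized product on $D$ is nilpotent. This product is $a\diamond b+b\diamond a=a\succ b+a\prec b=a*b$. Therefore the $*$-power $x^{*n}$ vanishes for some $n$.

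Next I will transfer this nilpotence to operators on all of $T$. Write $\ell(a)(b)=a\succ b$ and $r(a)(b)=b\prec a$.
\begin{itemize}
\item The identity $(a*b)\succ c=a\succ(b\succ c)$ says $\ell(a*b)=\ell(a)\ell(b)$. Hence $\ell(x)^n=\ell(x^{*n})=0$.
\item The identity $(c\prec a)\prec b=c\prec(a*b)$ says $r(a*b)=r(b)r(a)$. Hence $r(x)^n=r(x^{*n})=0$.
\item The middle identity $(x\succ b)\prec x=x\succ(b\prec x)$ says $\ell(x)$ and $r(x)$ commute.
\end{itemize}
Thus $\Phi=\ell(x)-r(x)$ is a difference of two commuting nilpotent operators, so it is nilpotent (for instance, $\Phi^{2n}=0$ by the binomial expansion). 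Since $\Phi(y)=y$, we get $y=\Phi^{2n}(y)=0$, a contradiction.

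The only delicate step is the passage from the one-generator subalgebra $D$ to nilpotence of $\ell(x)$ and $r(x)$ on the whole of $T$. This is handled by the observation that $\ell$ and $r$ are multiplicative, respectively anti-multiplicative, with respect to $*$, combined with Towers' theorem applied to $D$.
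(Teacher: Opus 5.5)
Your proof is correct and follows the paper's argument: Lemma~\ref{lem:one-generator-zinbiel} plus Towers' theorem give $x^{*n}=0$, and then a nilpotent operator built from $x$ fixes $y\neq0$. The only variation is the choice of operator. You use $x\tr(\cdot)=\ell(x)-r(x)$, getting nilpotence and commutation from the three preassociative identities, whereas the paper uses $\ad_x=L_x-R_x$ in the associative algebra $(D,*)$, where nilpotence and commutation follow from associativity, together with $x*y-y*x=x\tr y-y\tr x=y$.
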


\begin{proof}
It is easy to check that $L$ is preLie.

Suppose that $L$ embeds into a finite-dimensional preassociative algebra
$D$, and denote the images again by $x,y$. 
Let $E$ be the preassociative subalgebra generated by $x$.  Since~$x\tr x=0$, one has
$x\succ x=x\prec x$.  
Lemma~\ref{lem:one-generator-zinbiel} shows directly that $E$ is a finite-dimensional Zinbiel algebra.  
By~\cite[Theorem~2.4]{Towers}, its symmetrized total associative product is nilpotent.  
In particular, for the total associative product $a*b=a\prec b+a\succ b$ in $D$, one has
$x^{*N}=0$ for some $N$.

Left and right multiplication by $x$ in the associative algebra $(D,*)$ commute.  
Since $x^{*N}=0$, both operators are nilpotent; hence $\ad_x=L_x-R_x$ is nilpotent.
On the other hand, the commutator of the total preassociative product is the commutator of the induced preLie product.
Indeed, $x*y - y*x = x\tr y - y\tr x = y$.  
Thus $\ad_x(y) = y$, contradicting nilpotence of $\ad_x$.

Finally, the right multiplication by $x$ in~\eqref{eq:preas-counter} is zero, while the right multiplication by~$y$ sends $x$ to $y$ and $y$ to zero.  
These right multiplications commute, so the counterexample is also a Novikov algebra.
\end{proof}

\begin{corollary}
Not every finite-dimensional preLie algebra embeds into a~finite-dimensional associative Rota--Baxter algebra of weight zero via $x\tr y=R(x)y-yR(x)$.
\end{corollary}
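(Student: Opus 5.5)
We argue by contraposition, using the preLie algebra $L$ of Theorem~\ref{thm:preas-negative}. Suppose $L$ embeds, as a preLie algebra, into some finite-dimensional associative Rota--Baxter algebra $(A,R)$ of weight zero, where the preLie product on $A$ is $a\tr b=R(a)b-bR(a)$. We will produce from this a finite-dimensional preassociative algebra into which $L$ embeds, which Theorem~\ref{thm:preas-negative} forbids.

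The construction is the weight-zero Aguiar structure recalled in Section~2. On the same vector space $A$ define
\[
a\succ b=R(a)b,\qquad a\prec b=aR(b).
\]
Identity~\eqref{RB} with $\theta=0$ makes $(A,\prec,\succ)$ a preassociative algebra, and it is finite-dimensional because $A$ is. Its induced preLie product is
\[
a\succ b-b\prec a=R(a)b-bR(a),
\]
which is exactly the preLie product on $A$ through which $L$ was assumed to embed. So the given injective map $L\to A$ is also an embedding of $L$ into the induced preLie algebra of a finite-dimensional preassociative algebra.

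This means $L$ has property $A_{\PreAs}$, contradicting Theorem~\ref{thm:preas-negative}. Hence $L$ is a finite-dimensional preLie algebra admitting no such embedding into a weight-zero Rota--Baxter algebra, which proves the corollary.

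The only step needing care is the second one: checking that the Aguiar preassociative structure induces precisely the preLie product $R(x)y-yR(x)$, with the correct sign and order of arguments in $b\prec a=bR(a)$. Once this matches, the argument is immediate. The harder converse direction, Lemma~\ref{lem:doubles}a), is not needed.
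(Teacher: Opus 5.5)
Your proposal is correct and is the paper's argument: the weight-zero Aguiar structure $a\succ b=R(a)b$, $a\prec b=aR(b)$ turns the Rota--Baxter embedding into an embedding into a finite-dimensional preassociative algebra with the same induced preLie product, which contradicts Theorem~\ref{thm:preas-negative}. The paper states this in one line; you additionally spell out the check $a\succ b-b\prec a=R(a)b-bR(a)$.
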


\begin{proof}
Such an embedding would induce a finite-dimensional preassociative embedding, contrary to Theorem~\ref{thm:preas-negative}.
\end{proof}

The two finite-dimensional enveloping-algebra properties for preLie algebras are quite different.

\begin{example}
\label{ex:separation}
The preLie algebra \eqref{eq:preas-counter} has $A_{\PostAs}$ although it does not have $A_{\PreAs}$.
\end{example}

\begin{proof}
Let $A = M_2(\Bbbk)$, let $A_+$ be the subalgebra of matrices whose bottom row is zero, and let $A_-$ be the subalgebra of matrices whose top row is zero. 
Then $A=A_+\oplus A_-$.  
Put $R=-\pr_{A_-}$; this is a splitting Rota--Baxter operator of weight~1. 
Take $a=-E$ and $b=e_{21}$. Then
$\{a,b\}=0$, $R(a)=e_{22}$, and $R(b)=-e_{21}$. Hence
$$
\{R(a),b\} = b,\quad 
\{R(a),a\} = 0,\quad
\{R(b),a\} = \{R(b),b\} = 0.
$$
The map $x\to a$, $y\to b$ is therefore an injective postLie homomorphism from $(L,\{\,,\,\}=0,\tr)$ into the postLie algebra induced by the finite-dimensional postassociative algebra associated with $(A,R)$.
\end{proof}

\section{Failure for postLie algebras}

Lemma~\ref{lem:resonance} provides the main technical part.
Its conclusion is specific to the finite-dimensional situation forced by Lemma~\ref{lem:doubles}.

\begin{lemma}\label{lem:resonance}
Let~$A$ be a~finite-dimensional associative algebra over a~field of characteristic zero, and let~$R$ be a~Rota--Baxter operator of weight~1 on~$A$. 
Write $\{u,v\}=uv-vu$ and $\ad_u(v)=\{u,v\}$. 
Suppose that~$a\in A$, $a\neq0$, satisfies $\{R(a),a\}=\lambda a$ with $\lambda\neq0$.  
Then~$a$ is nilpotent and
$\Spec(\ad_{R(a)})\subset\lambda\mathbb Z$.
\end{lemma}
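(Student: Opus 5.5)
The plan has two parts: first the nilpotence of $a$, which uses only the relation $\{R(a),a\}=\lambda a$, and then the spectral claim, which is where the Rota--Baxter identity~\eqref{RB} has to be used. Throughout I extend scalars to an algebraic closure, as the paper allows, and put $b=R(a)$, so that $ba-ab=\lambda a$.

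\emph{Nilpotence.} Since $\ad_b$ is a derivation of $A$, induction gives $\ad_b(a^n)=n\lambda a^n$ for all $n\ge1$. Because $\lambda\neq0$ and $\operatorname{char}\Bbbk=0$, the numbers $n\lambda$ are pairwise distinct. A finite-dimensional operator has only finitely many eigenvalues, so $a^n=0$ for $n$ large. Hence $a$ is nilpotent.

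\emph{Shifting structure of $\ad_b$.} Let $A_\mu$ denote the generalized eigenspaces of $\ad_b$, and set $c=-a-R(a)=\tilde R(a)$, where $\tilde R=-\id-R$ is the complementary Rota--Baxter operator of weight~$1$. From $\{b,a\}=\lambda a$ it follows that $\ad_a$ maps $A_\mu$ into $A_{\mu+\lambda}$, and in particular $\ad_a$ is nilpotent. Relative to the filtration by the powers of the ideal generated by $a$ in the subalgebra generated by $a$ and $b$, the operators $\ad_b$ and $\ad_{b+a}=-\ad_c$ are unitriangular perturbations of one another. So I expect $\Spec(\ad_b)=\Spec(\ad_{-c})$, and more precisely that $a$ only moves spectrum along $\lambda\mathbb Z$.

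\emph{Bringing in~\eqref{RB}.} Taking commutators in~\eqref{RB} gives
$$
\{R(a),R(y)\}=R\bigl(\{R(a),y\}+\{a,R(y)\}+\{a,y\}\bigr).
$$
With $T=\ad_b+\ad_a\circ(R+\id)$ this reads
$$
\ad_b\circ R=R\circ T,\qquad \ad_{-c}\circ\tilde R=\tilde R\circ T .
$$
Since $R+\tilde R=-\id$, every generalized eigenvector of $T$ is sent by $R$ or by $\tilde R$ to a nonzero generalized eigenvector, either of $\ad_b$ or of $\ad_{-c}$, with the same eigenvalue. Therefore
$$
\Spec T\subset \Spec(\ad_b)\cup\Spec(\ad_{-c}).
$$

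\emph{The main obstacle.} The difficult step is to turn these intertwinings into the integrality statement $\Spec(\ad_b)\subset\lambda\mathbb Z$. The plan is as follows.

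1. Write $\Spec(\ad_b)=\{\mu_i-\mu_j\}$, where the $\mu_i$ are the eigenvalues of left multiplication by $b$ on the unitization $A^1$.
2. Decompose $A^1$ into cosets of $\lambda\mathbb Z$ in $\Bbbk$. That is, for each class $\omega\in\Bbbk/\lambda\mathbb Z$, let $P_\omega$ be the sum of the generalized $L_b$-eigenspaces with eigenvalues in $\omega$. Left and right multiplication by $a$ preserve each $P_\omega$, since they shift eigenvalues by $\pm\lambda$.
3. Show that the idempotents cutting out the $P_\omega$, which are polynomials in $b$, are compatible with $R$. Concretely, show that $R$ preserves the coarse decomposition $A=\bigoplus_{\bar\mu\in\Bbbk/\lambda\mathbb Z}\bigoplus_{\nu\in\bar\mu}A_\nu$. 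This should follow from the intertwining $\ad_b R=RT$, once one checks that $T$ acts on each coset block with eigenvalues in that same coset; the point is that $\ad_a$ only shifts by $\lambda$.
4. Conclude that $b=R(a)$ lies in the block of the class of $0$ (because $a$ does, as $a\in A_\lambda$), and that this block, being a subalgebra, contains only elements whose $\ad$-spectrum lies in $\lambda\mathbb Z$.

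The first approach I would try for step~3 is to apply the intertwining to $y=1$ in $A^1$, extending $R$ suitably, and to argue by induction on the nilpotency index of $a$, using the computation of the nilpotence part as the base.
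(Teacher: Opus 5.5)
\textbf{There is a genuine gap: the nilpotence part is correct, but the spectral part is not proved, and your plan cannot be completed from the inputs it uses.}

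Your nilpotence argument is correct and coincides with the paper's. Your intertwinings $\ad_b\circ R=R\circ T$ and $\ad_{-c}\circ\tilde R=\tilde R\circ T$ are also correct. The problems are in the spectral part:
\begin{itemize}
\item Step~3 is circular. $T=\ad_b+\ad_a\circ(R+\id)$ contains $R$, so showing that $T$ respects the coset blocks already presupposes that $R$ does.
\item The inclusion $\Spec T\subset\Spec(\ad_b)\cup\Spec(\ad_{-c})$ points the wrong way: it bounds $\Spec T$, not $\Spec(\ad_b)$.
\item Step~4 is a non sequitur. $b\in\ker\ad_b\subset A_0$ holds for every $b$. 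Lying in the subalgebra $\bigoplus_{\nu\in\lambda\mathbb Z}A_\nu$ says nothing about the eigenvalues of $\ad_b$ on the other blocks.
\end{itemize}

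More fundamentally, the only input from~\eqref{RB} you use is its antisymmetrization, i.e.\ that $R$ is a weight-$1$ Rota--Baxter operator on the commutator Lie algebra of $A$. That does not suffice. Take $A=M_3(\Bbbk)$, let $A_-$ be the lower triangular matrices (diagonal included), $t=\mathrm{diag}(1,0,1/2)$, and $A_+=\mathrm{span}(e_{12}+t,\,e_{13},\,e_{23})$. One has $\{e_{12}+t,e_{13}\}=\frac12e_{13}$, $\{e_{12}+t,e_{23}\}=e_{13}-\frac12e_{23}$ and $\{e_{13},e_{23}\}=0$. So $A=A_-\oplus A_+$ is a sum of Lie subalgebras, and $R(x_-+x_+)=-x_-$ satisfies the commutator form of~\eqref{RB}. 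For $a=e_{12}=(e_{12}+t)-t$ you get $R(a)=t$ and $\{R(a),a\}=a$, so $\lambda=1$. Yet $\ad_t(e_{13})=\frac12e_{13}$. Every relation you use holds here, and even the block preservation targeted in step~3 holds ($R$ kills $e_{13},e_{23}$, negates $e_{31},e_{32}$, and sends the class-$0$ block into itself). Still $t$ lies in its class-$0$ block while $\ad_t$ has the eigenvalue $1/2$, so the conclusion fails. The reason is that $A_+$ is not an associative subalgebra.

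The missing idea is the paper's property ($\star$), which needs the full, non-antisymmetrized identity~\eqref{RB}. For $x*y=xy+R(x)y+xR(y)$, both $R$ and $S=R+\id$ are homomorphisms $(A,*)\to A$. With $t=R(a)$ and $p=t+a$, this gives $R(a^{*n})=t^n$ and $S(a^{*n})=p^n$. Then $S-R=\id$ forces $f(t)=0$ whenever $f(0)=0$ and $f(t)=f(p)$. The automorphism $\Phi=\exp(-\ad_a/\lambda)$ satisfies $\Phi(t)=p$; this also cleanly proves your expected equality $\Spec(\ad_b)=\Spec(\ad_{b+a})$. Consequently no nonzero polynomial in $t$ without constant term can commute with $a$.

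The paper applies this as follows:
\begin{itemize}
\item Build a graph on the nonzero eigenvalues of $t$, joining $\alpha,\beta$ when $e_\alpha ae_\beta\neq0$ or $e_\beta ae_\alpha\neq0$; each edge forces $\alpha-\beta=\pm\lambda$.
\item For a connected component $C$, set $e_C=\sum_{\alpha\in C}q_\alpha(t)$, the sum of the spectral idempotents of $t$ over $C$. By ($\star$), $\{e_C,a\}\neq0$.
\item The vector $\{e_C,a\}$ is a $\lambda$-eigenvector of $\ad_t$ lying in $\bigoplus_{\alpha\in C}(A_{\alpha,0}\oplus A_{0,\alpha})$. Hence $\pm\lambda\in C$, and all eigenvalues of $t$ lie in $\lambda\mathbb Z$.
\end{itemize}
In the $M_3$ example, the isolated eigenvalue $1/2$ gives $e_C=e_{33}=-4t(t-1)$, which commutes with $a$. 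This is exactly the situation ($\star$) excludes in the associative setting.
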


\begin{proof}
Put~$t=R(a)$ and~$p=(R+\id)(a)=t+a$.

First,~$a$ is nilpotent. 
Indeed, for every~$m\ge1$ such that~$a^m\neq0$, one has~$\{t,a^m\} = m\lambda a^m$. 
Thus the nonzero powers~$a^m$ are eigenvectors of~$\ad_t$ with pairwise distinct eigenvalues~$m\lambda$. 
Since~$A$ is finite-dimensional, only finitely many of them can be nonzero, and hence~$a$ is nilpotent.

We will use the following fact:

\smallskip

($\star$) Let~$f(X)\in \Bbbk[X]$.
If $f(0) = 0$ and~$f(t)=f(p)$, then~$f(t)=0$.

\smallskip

Indeed, define the associative product~$x*y=xy+R(x)y+xR(y)$ on~$A$ and set~$S=R+\id$.
By~\eqref{RB}, $R(x*y)=R(x)R(y)$ and~$S(x*y)=S(x)S(y)$. Let~$a^{*1}=a$ and~$a^{*(n+1)}=a*a^{*n}$. Then, by induction,~$R(a^{*n})=t^n$ and~$S(a^{*n})=p^n$.
Let $f(X)=\sum_{n\ge1}c_nX^n$ and put~$z=\sum_{n\ge1}c_n a^{*n}$.
Then~$R(z)=f(t)$ and~$S(z)=f(p)$. If~$f(t)=f(p)$, then~$R(z)=S(z)$. 
Since~$S-R=\id$, we obtain~$z=0$, and therefore~$f(t)=R(z)=0$. 
Thus, if~$f(0)=0$ and~$f(t)=f(p)$, then~$f(t)=0$.

Since~$a$ is nilpotent, the derivation~$\ad_a$ is nilpotent. 
We have $\Phi := \exp(-\ad_a/\lambda)\in\Aut(A)$.
Since~$\{a,t\}=-\lambda a$ and~$\{a,\{a,t\}\}=0$, we have~$\Phi(t)=t+a=p$. 
Hence, for every polynomial~$f$ without constant term,~$f(p)=\Phi(f(t))$.
If~$f(t)$ commutes with~$a$, then~$\Phi(f(t))=f(t)$, and hence~$f(p)=f(t)$.

We now pass to the spectral argument. 
Choose a nonzero polynomial $M(X)$ of minimal degree such that
$M(0)=0$ and $M(t)=0$. After extending scalars to an algebraic closure,
write
$M(X)=X^r\prod_{\alpha\in\Sigma}(X-\alpha)^{n_\alpha}$, $r\geq1$,
where $\Sigma$ is the set of nonzero roots of~$M$.
All subsequent constructions in this proof are performed over
$\overline{\Bbbk}$; property~($\star$) remains valid after this scalar
extension.
For every~$\alpha\in\Sigma$, choose a~polynomial~$q_\alpha(X)$ such that
$$
q_\alpha(X)\equiv1\pmod{(X-\alpha)^{n_\alpha}}, \quad
q_\alpha(X)\equiv0\pmod{X^r\prod_{\beta\in\Sigma\setminus\{\alpha\}}(X-\beta)^{n_\beta}}.
$$
Choose each $q_\alpha$ of degree less than $\deg M$.  
Then $q_\alpha(0)=0$, and $e_\alpha=q_\alpha(t)$ is a~nonzero idempotent.  
The idempotents $e_\alpha$ are pairwise orthogonal.

Let~$L_t$ and~$R_t$ denote left and right multiplication by~$t$. They commute. 
For a~sufficiently large~$N$ put
$A_{\alpha,\beta} = \ker(L_t-\alpha\id)^N \cap \ker(R_t-\beta\id)^N$.
Then we have the joint generalized eigenspace decomposition
$$
A=\bigoplus_{\alpha,\beta\in\Sigma\cup\{0\}}A_{\alpha,\beta}.
$$
For~$\alpha,\beta\in\Sigma$,~$A_{\alpha,\beta}=e_\alpha A e_\beta$.

Define a~graph~$G$ on~$\Sigma$ by joining~$\alpha$ and~$\beta$ if~$e_\alpha a e_\beta\neq0$ or~$e_\beta a e_\alpha\neq0$.
If~$e_\alpha a e_\beta\neq0$, then this element is a~$\lambda$-eigenvector of~$\ad_t$, since
$$
\{t,e_\alpha a e_\beta\}
  =e_\alpha\{t,a\}e_\beta
  =\lambda e_\alpha a e_\beta.
$$
On the other hand, the only possible eigenvalue of
$\ad_t=L_t-R_t$ on~$A_{\alpha,\beta}$ is~$\alpha-\beta$.
Hence~$\alpha-\beta=\lambda$.
Similarly,~$e_\beta a e_\alpha\neq0$ gives~$\alpha-\beta=-\lambda$.

Let~$C$ be a~connected component of~$G$, and put~$e_C=\sum_{\alpha\in C}e_\alpha$.
Then~$e_C=q_C(t)$ for some $q_C\in \overline{\Bbbk}[X]$ with~$q_C(0)=0$.
If~$\{e_C,a\}=0$, then~$\Phi(e_C)=e_C$, and therefore~$q_C(p)=e_C=q_C(t)$.
By~($\star$), $e_C=0$, which is impossible. Hence~$\{e_C,a\}\neq0$.

Since~$C$ is a~connected component of~$G$, one has~$e_C a e_\beta=0$ and~$e_\beta a e_C=0$ for any~$\beta\in\Sigma\setminus C$.
Let us determine the support of~$\{e_C,a\}$ in the joint generalized
eigenspace decomposition.
For~$\alpha,\beta\in\Sigma$, one has~$A_{\alpha,\beta}= e_\alpha A e_\beta$, and the projection onto~$A_{\alpha,\beta}$ is given by~$x\to e_\alpha x e_\beta$. Therefore
$$
e_\alpha \{e_C,a\} e_\beta
 = e_\alpha e_C a e_\beta - e_\alpha a e_C e_\beta.
$$
If~$\alpha,\beta\in C$, then~$e_\alpha e_C=e_\alpha$ and~$e_C e_\beta=e_\beta$, so the right-hand side is~$e_\alpha a e_\beta-e_\alpha a e_\beta=0$.
If~$\alpha\in C$, $\beta\notin C$, then~$e_\alpha e_C=e_\alpha$ and~$e_C e_\beta=0$, so the right-hand side is~$e_\alpha a e_\beta=0$ by the definition of a~connected component.
The case~$\alpha\notin C$, $\beta\in C$ is similar, and if~$\alpha,\beta\notin C$, then both~$e_\alpha e_C$ and~$e_C e_\beta$ vanish. 
Hence
$e_\alpha \{e_C,a\} e_\beta=0$ for all~$\alpha,\beta\in\Sigma$.
Moreover, left multiplication by $e_C$ projects onto the direct sum of the left generalized eigenspaces indexed by $C$, and right multiplication by $e_C$ has the analogous property.  
Thus, the $A_{0,0}$-component also vanishes, and all nonzero components of~$\{e_C,a\}$ lie in
$\bigoplus_{\alpha\in C}(A_{\alpha,0}\oplus A_{0,\alpha})$.

On~$A_{\alpha,0}$ the only possible eigenvalue of~$\ad_t$ is~$\alpha$, and on~$A_{0,\alpha}$ it is~$-\alpha$. 
At the same time, the Jacobi identity in $(A,\{\,,\,\})$ and the hypothesis of the statement give
$$
\{t,\{e_C,a\}\}
  = \{e_C,\{t,a\}\}
  = \lambda\{e_C,a\}.
$$
Thus the nonzero vector~$\{e_C,a\}$ is a~$\lambda$-eigenvector of~$\ad_t$.
Since all its nonzero components lie in~$\bigoplus_{\alpha\in C}(A_{\alpha,0}\oplus A_{0,\alpha})$, we have $\alpha=\lambda$ or~$\alpha=-\lambda$ for some~$\alpha\in C$.

Every edge of the graph changes the spectral value by~$\lambda$ or~$-\lambda$. Since every connected component contains either~$\lambda$ or~$-\lambda$, it follows that~$\Sigma\subset\lambda\mathbb Z$.

Finally, the decomposition into the spaces~$A_{\alpha,\beta}$ is invariant under~$\ad_t$, and the only possible eigenvalue of~$\ad_t$ on $A_{\alpha,\beta}$ is~$\alpha-\beta$. 
Since~$0\in\lambda\mathbb Z$ and $\Sigma\subset\lambda\mathbb Z$, every eigenvalue of~$\ad_t$ on~$A$ belongs
to~$\lambda\mathbb Z$.
\end{proof}

\begin{theorem}\label{thm:post-negative}
Let $\Bbbk$ be a field of characteristic zero and let $L=\Bbbk x\oplus\Bbbk e$ have zero initial Lie bracket
$\{\,,\,\}$ and product
\begin{equation}\label{eq:half-example}
x\tr x = x,\quad
x\tr e = 1/2e,\quad
e\tr x = e\tr e = 0.
\end{equation}
Then $(L,\{\,,\,\},\tr)$ is a postLie algebra which does not have $A_{\PostAs}$.
\end{theorem}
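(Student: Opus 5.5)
First I will check that $L$ is preLie (zero initial bracket, so postLie). The left multiplication operators are $D_x=\mathrm{diag}(1,1/2)$ in the basis $x,e$, and $D_e=0$. The induced bracket is $[x,e]=x\tr e-e\tr x=e/2$. So I need $[D_x,D_e]=0=D_{e/2}$, which holds. The derivation condition is vacuous because the initial bracket is zero.

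Now suppose $L$ embeds into a finite-dimensional postassociative algebra. By Lemma~\ref{lem:doubles}~b), $L$ then embeds into the postLie algebra of a finite-dimensional associative Rota--Baxter algebra $(A,R)$ of weight~1, with $\{u,v\}=uv-vu$ and $u\tr v=\{R(u),v\}$ as in \eqref{eq:RB-post}. I will denote the images again by $x,e$; both are nonzero and linearly independent. The relations become
$$\{R(x),x\}=x,\quad \{R(x),e\}=\tfrac12 e,\quad \{R(e),x\}=\{R(e),e\}=0,\quad \{x,e\}=0.$$

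Apply Lemma~\ref{lem:resonance} to $a=x$ with $\lambda=1$. This gives $\Spec(\ad_{R(x)})\subset\mathbb Z$. But $e\neq0$ satisfies $\ad_{R(x)}(e)=\tfrac12 e$, so $1/2$ is an eigenvalue of $\ad_{R(x)}$, which contradicts $1/2\notin\mathbb Z$ in characteristic zero. Hence $L$ does not have $A_{\PostAs}$.

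The only delicate point is confirming that the embedding of Lemma~\ref{lem:doubles} preserves both the initial bracket and $\tr$, so that the relations above really hold in $A$. The paper states this right after the lemma. The heavy lifting was already done in Lemma~\ref{lem:resonance}, so no step should be a serious obstacle.
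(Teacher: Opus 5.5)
Your proposal is correct and follows the paper's proof essentially verbatim. You pass via Lemma~\ref{lem:doubles} to a finite-dimensional Rota--Baxter algebra of weight~1, apply Lemma~\ref{lem:resonance} with $\lambda=1$ to the image of $x$, and contradict integrality with the eigenvalue $1/2$ of $\ad_{R(x)}$ on the image of $e$; your explicit check $[D_x,D_e]=0=D_{e/2}$ simply fills in the verification the paper calls easy.
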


\begin{proof}
It is easy to check that~$L$ is a postLie algebra.

Suppose that $L$ embeds into a finite-dimensional postassociative algebra~$M$.
By Lemma~\ref{lem:doubles}, we embed~$L$ further into a finite-dimensional Rota--Baxter associative algebra $(A,R)$ of weight~1 where $R$ is splitting.
Denote the images of $x,e$ by $a,v$.
The initial Lie bracket is zero, while the induced postLie product is given by~\eqref{eq:RB-post}; therefore
$$
\{R(a),a\} = a,\quad 
\{R(a),v\}  = 1/2v.
$$
Lemma~\ref{lem:resonance}, applied with $\lambda=1$, implies that every eigenvalue of $\ad_{R(a)}$ is an integer.  Since $v\neq0$ is an eigenvector with eigenvalue $1/2$, this is a contradiction.
\end{proof}

The failure persists within the class of Novikov algebras.

\begin{corollary}\label{cor:novikov}
The two-dimensional algebra
$$
x\tr x = x, \quad
x\tr e = 1/2e,\quad
e\tr x = e, \quad
e\tr e = 0
$$
is a Novikov algebra and does not have $A_{\PostAs}$.
\end{corollary}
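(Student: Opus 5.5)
Two claims need proof: the algebra is Novikov (in particular preLie), and it fails $A_{\PostAs}$. For the first, I would verify the preLie identity via the left multiplications. Here $D_x=\mathrm{diag}$-type with $D_x(x)=x$, $D_x(e)=\tfrac12 e$, and $D_e(x)=e$, $D_e(e)=0$. The requirement is $[D_x,D_e]=D_{x\tr e-e\tr x}=D_{-\frac12 e}$. Computing on $x$: $D_xD_e(x)-D_eD_x(x)=\tfrac12 e-e=-\tfrac12 e$, which agrees with $-\tfrac12 D_e(x)$. On $e$: $D_xD_e(e)-D_eD_x(e)=0-\tfrac12\cdot 0=0$, which also agrees. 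The identities involving only $D_x,D_x$ or $D_e,D_e$ are trivial.

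For the Novikov identity I would check that right multiplications commute. Here $R_x\colon x\mapsto x,\ e\mapsto e$ is the identity, so it commutes with everything, and then $(u\tr v)\tr w=(u\tr w)\tr v$ follows. The initial bracket is zero, so the postLie axioms reduce to the preLie one.

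For the failure of $A_{\PostAs}$ I would repeat the proof of Theorem~\ref{thm:post-negative} verbatim. Its argument only uses the products $x\tr x=x$ and $x\tr e=\tfrac12 e$, together with the vanishing initial bracket. Suppose $L$ embeds into a finite-dimensional postassociative algebra. Lemma~\ref{lem:doubles} gives a finite-dimensional weight-1 Rota--Baxter algebra $(A,R)$. Let $a,v$ be the images of $x,e$, with $\{a,v\}=0$. Equation~\eqref{eq:RB-post} then gives $\{R(a),a\}=a$ and $\{R(a),v\}=\tfrac12 v$. Lemma~\ref{lem:resonance} with $\lambda=1$ forces $\Spec(\ad_{R(a)})\subset\mathbb Z$, while $v\ne0$ has eigenvalue $\tfrac12$, a contradiction. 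The extra product $e\tr x=e$ simply plays no role in this argument.

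There is no real obstacle here. The only step that needs care is the preLie/Novikov verification, and in particular the consistency check $[D_x,D_e]=-\tfrac12 D_e$, which is where the coefficient $\tfrac12$ and the new product $e\tr x=e$ interact.
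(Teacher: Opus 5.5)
Your proposal is correct and follows the paper's proof: the failure of $A_{\PostAs}$ is exactly the argument of Theorem~\ref{thm:post-negative}, which uses only $x\tr x=x$, $x\tr e=\frac12 e$ and the zero initial bracket. The only difference is that you verify the preLie and Novikov identities by hand (via $[D_x,D_e]=-\frac12 D_e$ and the fact that right multiplication by $x$ is the identity) instead of citing the Bai--Meng classification, which makes this step self-contained.
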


\begin{proof}
It is known that this algebra is a~Novikov algebra~\cite{BaiMeng}.
The rest is the same as in the proof of Theorem~\ref{thm:post-negative}.
\end{proof}

More generally, the proof gives the following useful test.

\begin{corollary}\label{cor:eigen-test}
Let $L$ be a finite-dimensional postLie algebra with $A_{\PostAs}$.  
If nonzero elements $x,y\in L$ satisfy
$x\tr x = \lambda x$,
$x\tr y = \mu y$, $\lambda\neq0$,
then $\mu/\lambda\in\mathbb Z$.
\end{corollary}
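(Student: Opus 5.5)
The argument repeats the proof of Theorem~\ref{thm:post-negative} with general $\lambda$ and $\mu$. Assume $L$ has $A_{\PostAs}$, so $L$ embeds into the induced postLie algebra of a finite-dimensional postassociative algebra $M$. By Lemma~\ref{lem:doubles}\,b), $M$ embeds into a finite-dimensional associative Rota--Baxter algebra $(A,R)$ of weight~1 with splitting $R$. This embedding preserves the induced postLie structure, so the composite is an injective postLie homomorphism from $L$ into the postLie algebra of $(A,R)$, where $u\tr v=\{R(u),v\}$ by~\eqref{eq:RB-post}. Denote the images of $x,y$ by $a,b$. They are nonzero by injectivity, and the relations become
$$
\{R(a),a\}=\lambda a,\qquad \{R(a),b\}=\mu b .
$$

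Next apply Lemma~\ref{lem:resonance} to $a\neq0$ with $\lambda\neq0$. It gives $\Spec(\ad_{R(a)})\subset\lambda\mathbb Z$, the spectrum being taken after extending scalars to $\overline{\Bbbk}$ as in that lemma. Since $b\neq0$ and $\ad_{R(a)}(b)=\mu b$, the scalar $\mu$ is an eigenvalue of $\ad_{R(a)}$. Hence $\mu\in\lambda\mathbb Z$, i.e. $\mu/\lambda\in\mathbb Z$.

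There are no real obstacles, since all the work is in Lemma~\ref{lem:resonance}. Two points need checking. First, the eigenvalue relation must be stated in the associative commutator $\{\,,\,\}$ of $A$, not in the initial bracket of $L$. This holds because the postLie product of the Rota--Baxter algebra is $\{R(u),v\}$ regardless of the initial bracket on $L$, and the embedding respects $\tr$. Second, the eigenvector $b$ survives scalar extension, which is immediate. Unlike Theorem~\ref{thm:post-negative}, no assumption that the initial bracket on $L$ vanishes is used.
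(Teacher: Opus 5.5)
Your proof is correct and is essentially the paper's own argument. You embed into a finite-dimensional postassociative algebra, pass to the splitting Rota--Baxter double of Lemma~\ref{lem:doubles}, rewrite the relations as $\{R(a),a\}=\lambda a$ and $\{R(a),b\}=\mu b$, and read off $\mu\in\lambda\mathbb Z$ from Lemma~\ref{lem:resonance}. Your remark that the vanishing of the initial bracket is never used is also right, since only $\tr$ enters the two eigenvalue relations.
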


\begin{proof}
Embed $L$ into a finite-dimensional postassociative algebra and then pass to the splitting Rota--Baxter double from Lemma~\ref{lem:doubles}. 
If $a,v$ are the images of $x,y$, respectively, then
$\{R(a),a\}=\lambda a$,
$\{R(a),v\}=\mu v$.
Lemma~\ref{lem:resonance} implies that $\mu\in\lambda\mathbb Z$, as required.
\end{proof}

\section*{Acknowledgements}

The author is grateful to Pavel Kolesnikov for helpful discussions.

The research was carried out within the framework of the Sobolev
Institute of Mathematics state contract (project FWNF-2026-0017).

\noindent Vsevolod Gubarev \\
Sobolev Institute of Mathematics \\
Acad. Koptyug ave. 4, 630090 Novosibirsk, Russia \\
Novosibirsk State University \\
Pirogova str. 1, 630090 Novosibirsk, Russia \\
e-mail: wsewolod89@gmail.com


\begin{thebibliography}{99}
\bibitem{Ado-0}
I. D. Ado,
Note on the representation of finite continuous groups by means of linear
substitutions, Izv. Fiz.-Mat. Obshch. (Kazan) {\bf 7} (1935), 1--43 (in Russian).

\bibitem{Ado}
I. D. Ado,
The representation of Lie algebras by matrices,
Uspekhi Mat. Nauk {\bf 2} (1947), no.~6(22), 159--173 (in Russian).

\bibitem{Aguiar}
M. Aguiar,
Pre-Poisson algebras,
Lett. Math. Phys. (4) {\bf 54} (2000), 263--277.

\bibitem{BaiMeng}
C. Bai and D. Meng,
The classification of Novikov algebras in low dimensions, J.~Phys. A: Math. Gen. (8) {\bf 34} (2001), 1581--1594.

\bibitem{BaiGuoNiPost}
C. Bai, L. Guo, and X. Ni,
Nonabelian generalized Lax pairs, the classical Yang--Baxter equation and PostLie algebras, Comm. Math. Phys. (2) {\bf 297} (2010), 553--596.

\bibitem{Burde}
D. Burde,
Left-symmetric algebras, or pre-Lie algebras in geometry and physics,
Cent. Eur. J. Math. (3) {\bf 4} (2006), 323--357.

\bibitem{BurdeDekimpeVercammen}
D. Burde, K. Dekimpe, and K. Vercammen,
Affine actions on Lie groups and post-Lie algebra structures,
Linear Algebra Appl. (5) {\bf 437} (2012), 1250--1263.

\bibitem{Chapoton}
F. Chapoton,
Un th\'eor\`eme de Cartier--Milnor--Moore--Quillen pour les big\`ebres
dendriformes et les alg\`ebres braces,
J. Pure Appl. Algebra (1) {\bf 168} (2002), 1--18.

\bibitem{ChenMo}
Y. Chen and Q. Mo,
Embedding dendriform algebra into its universal enveloping Rota--Baxter algebra,
Proc. Amer. Math. Soc. (12) {\bf 139} (2011), 4207--4216.

\bibitem{Dokas}
I. Dokas,
Pre-Lie algebras in positive characteristic,
J. Lie Theory (4) {\bf 23} (2013), 937--952.

\bibitem{DotsenkoPost}
V. Dotsenko,
Functorial PBW theorems for post-Lie algebras, Comm. Algebra (5) {\bf 48} (2020), 2072--2080.

\bibitem{DotsenkoTamaroff}
V. Dotsenko and P. Tamaroff,
Endofunctors and Poincar\'e--Birkhoff--Witt theorems,
Int. Math. Res. Not. (16) {\bf 2021} (2021), 12670--12690.

\bibitem{EbrahimiFard}
K. Ebrahimi-Fard,
Loday-type algebras and the Rota--Baxter relation,
Lett. Math. Phys. (2) {\bf 61} (2002), 139--147.

\bibitem{EbrahimiFardGuo}
K. Ebrahimi-Fard and L. Guo,
Rota--Baxter algebras and dendriform algebras,
J. Pure Appl. Algebra (2) {\bf 212} (2008), 320--339.

\bibitem{EFLM}
K.~Ebrahimi-Fard, A.~Lundervold, and H.~Z.~Munthe-Kaas,
On the Lie enveloping algebra of a post-Lie algebra,
J. Lie Theory (4) {\bf 25} (2015), 1139--1165.

\bibitem{Gerstenhaber}
M. Gerstenhaber,
The cohomology structure of an associative ring,
Ann. Math. (2) {\bf 78} (1963), 267--288.

\bibitem{GubarevRBAs}
V. Gubarev,
Universal enveloping associative Rota--Baxter algebras of preassociative and postassociative algebra, 
J. Algebra {\bf 516} (2018), 298--328.

\bibitem{GubarevPreLie}
V. Gubarev,
Embedding of pre-Lie algebras into preassociative algebras,
Algebra Colloq. (2) {\bf 27} (2020), 299--310.

\bibitem{GubarevPostLie}
V. Gubarev,
Embedding of post-Lie algebras into postassociative algebras,
in {\it New Trends in Algebras and Combinatorics}, World Scientific,
Singapore, 2020, 57--67.

\bibitem{GubarevPBW}
V. Gubarev,
Poincar\'e--Birkhoff--Witt theorem for pre-Lie and post-Lie algebras,
J.~Lie Theory (1) {\bf 30} (2020), 223--238.

\bibitem{GK}
V. Gubarev and P. Kolesnikov,
Embedding of dendriform algebras into Rota--Baxter algebras,
Cent. Eur. J. Math. (2) {\bf 11} (2013), 226--245.

\bibitem{Guo}
L. Guo,
{\it An Introduction to Rota--Baxter Algebra},
Surveys of Modern Mathematics, vol.~4, International Press, Somerville,
and Higher Education Press, Beijing, 2012.

\bibitem{KolesnikovLeibniz}
P.S. Kolesnikov,
Conformal representations of Leibniz algebras,
Sib. Math. J. (3) {\bf 49} (2008), 429--435.

\bibitem{KolesnikovAdo}
P. Kolesnikov,
The Ado theorem for finite Lie conformal algebras with Levi decomposition,
J. Algebra Appl. (7) {\bf 15} (2016), 1650130.

\bibitem{KolesnikovPreAs}
P. Kolesnikov,
Gr\"obner--Shirshov bases for pre-associative algebras,
Comm. Algebra (12) {\bf 45} (2017), 5283--5296.

\bibitem{LodayDialgebras}
J.-L. Loday,
Dialgebras,
in {\it Dialgebras and Related Operads}, Lecture Notes in Math. 1763,
Springer, Berlin, 2001, 7--66.

\bibitem{LodayRonco}
J.-L. Loday and M. Ronco,
Trialgebras and families of polytopes,
in {\it Homotopy Theory: Relations with Algebraic Geometry, Group Cohomology,
and Algebraic $K$-Theory}, Contemp. Math. {\bf 346}, Amer. Math. Soc., Providence, 2004,
369--398.

\bibitem{Manchon}
D. Manchon,
A short survey on pre-Lie algebras,
in {\it Noncommutative Geometry and Physics: Renormalisation, Motives, Index
Theory}, EMS, Z\"urich, 2011, 89--102.

\bibitem{MikhalevShestakov}
A. A. Mikhalev and I. P. Shestakov,
PBW-pairs of varieties of linear algebras,
Comm. Algebra (2) {\bf 42} (2014), 667--687.

\bibitem{MostovoyPerezShestakov}
J. Mostovoy, J. M. P\'erez-Izquierdo, and I. P. Shestakov,
Hopf algebras in non-associative Lie theory,
Bull. Math. Sci. (1) {\bf 4} (2014), 129--173.

\bibitem{Ronco}
M. Ronco,
Eulerian idempotents and Milnor--Moore theorem for certain noncocommutative
Hopf algebras, J. Algebra {\bf 254} (2002), 152--172.

\bibitem{Towers}
D. A. Towers,
Zinbiel algebras are nilpotent, J. Algebra Appl. (8) {\bf 22} (2023), 2350166.

\bibitem{Vallette}
B. Vallette,
Homology of generalized partition posets, J. Pure Appl. Algebra (2) {\bf 208} (2007), 699--725.

\bibitem{Vinberg}
E. B. Vinberg,
The theory of homogeneous convex cones, Tr. Mosk. Mat. Obs. {\bf 12} (1963), 303--358.
\end{thebibliography}
\end{document}